\documentclass[ reqno]{amsart}
\usepackage{graphicx} 

\title[Weakly Compact Operators Whose Adjoints Preserve $C^*$-Structure]{Weakly Compact Operators Whose Adjoints Preserve $C^*$-Structure}
 \author[N. Hotwani]{Neha Hotwani${}^1$}
\address{Department of Mathematics\\
Shiv Nadar Institution of Eminence. Gautam Buddha
Nagar-201314, India}
\email{neha.hotwani@snu.edu.in, nehahotwani19@gmail.com}

\author[T. S. S. R. K. Rao]{T. S. S. R. K. Rao${}^2$}
\address{Department of Mathematics\\
Shiv Nadar Institution of Eminence. Gautam Buddha
Nagar-201314, India}
\email{srin@fulbrightmail.org}

\subjclass[2020]{47L07, 46B20, 46L10 (Primary);  47L05 (Secondary)}

\keywords{$C^*$-algebras, $C^*$-extreme points, Linear extreme points,  Strongly extreme points, Uniformly strongly extreme,  Spaces of operators, Geometry of Banach spaces}
\usepackage{amsmath,amsthm, amsfonts, amssymb, setspace, color, enumerate, bbold}
      \newtheorem{theorem}{Theorem}[section]

      \newtheorem{remark}[theorem]{Remark}
       \newtheorem{corollary}{Corollary}
      \newtheorem{lemma}[theorem]{Lemma}
      
      \newtheorem{proposition}[theorem]{Proposition}

      \def\N{{\mathbb N}}

      \def\cK{\mathcal K}
      
      \def\cM{\mathcal M}

      \def\cW{\mathcal W}
      
      \def\cA{\mathcal A}
      \def\cB{\mathcal B}

      \def\cK{\mathcal K}

      \def\bb1{\mathbb 1}

\usepackage{ulem}
\usepackage{titlesec}
\usepackage{xstring}

\titleformat{\section}
  {\normalfont\Large\bfseries\centering} 
  {\thesection}{1em}{}                   

\renewcommand{\thesection}{\arabic{section}}

\newcommand{\conv}{\operatorname{co}}

\usepackage{ulem}
\date{}

\usepackage[dvipsnames]{xcolor}

\usepackage[pagebackref,colorlinks,linkcolor=Maroon,citecolor=MidnightBlue,urlcolor=NavyBlue,hypertexnames=true]{hyperref}

\newtheorem{letterthm}{Theorem}

\begin{document}
\begin{abstract}
Let $\Omega$ be a compact Hausdorff space and $X$ be a complex Banach space such that $X^{**}$ is isometric to a $C^*$-algebra. Let $\cW(X^*, C(\Omega))$ denote the space of weakly compact operators. In this article, we show that the collection of operators $T$ in $\cW(X^*, C(\Omega))$ such that $T^*$ maps linear extreme points of the unit ball of $C(\Omega)^*$ to $C^*$-extreme points of $X^{**}$ is a uniformly strongly extreme set. We also investigate this phenomenon when $X^{**}$ is isometric to a space of all operators on a Banach space.
\end{abstract}
\maketitle

\section{Introduction}
    \label{sec:Intro}

    The purpose of this article is to understand the geometry of infinite-dimensional complex Banach spaces $X$ whose bidual $X^{**}$ is isometric to a $C^*$-algebra, with particular emphasis on the interplay between Banach space geometry and the underlying $C^*$-algebraic structure.

In \cite{LM}, Labuschagne et al. characterized linear maps between $C^*$-algebras whose adjoints preserve extreme structures of the dual unit balls. Inspired by this line of investigation, we study weakly compact operators between a predual of a $C^*$-algebra and a commutative unital $C^*$-algebra, whose adjoints map linear extreme points to $C^*$-extreme points of the corresponding closed unit balls.
For a given infinite compact Hausdorff space $\Omega$, we denote by $C(\Omega)$ the space of all scalar-valued continuous functions on $\Omega$, equipped with the supremum norm.
We recall that the space of weakly compact operators $\cW(X, C(\Omega))$ can be isometrically identified with the space of 
continuous functions from $\Omega$ to the dual $X^*$ , when $X^*$ is equipped with the weak topology, denoted by $WC(\Omega, X^*)$, and $WC(\Omega, X^*)$ has the supremum norm. 
This identification is given by 
$T\mapsto T^*\big|_{\delta(\Omega)}$, 
where $\delta: \Omega \to C(\Omega)^*$ denotes the Dirac map, 
that is,  $\delta(\omega)$ is the evaluation functional at $\omega$.
A similar result identifies the space of compact operators $\cK(X, C(\Omega))$ with the space of norm-continuous functions $C(\Omega, X^*)$, endowed with the supremum norm. 
Now if $X$ is such that $X^{**}$ is a $C^*$-algebra, 
we consider $\cW(X^*, C(\Omega))$ 
which now corresponds to 
$WC(\Omega, X^{**})$. 
In general, it is not known if
$WC(\Omega, X^{**})$ can be a $C^*$-algebra 
unlike the situation $C(\Omega, X^{**})$, 
which is always a $C^*$-algebra. 
Thus, the main purpose of this article is to understand the geometry of $WC(\Omega, X^{**})$ or, equivalently, the space $\cW(X^*, C(\Omega))$. 
See \cite{R} where it was shown for an infinite compact Hausdorff space $\Omega$, 
if every $X$-valued weakly continuous function is also continuous in the norm topology then $X$ has the Schur property, that is, weak and norm sequential convergence coincide in $X$.
In \cite[Lemma~3.8]{K}, it was shown that infinite-dimensional $C^*$-algebras fail the Schur property.
Thus, in our case, since $X^{**}$ fails the Schur property, $WC(\Omega, X^{**})$ does not coincide with $C(\Omega, X^{**})$. We do not know whether there exists an infinite-dimensional Banach space $ X$ such that $X^{\ast\ast}$ is a $C^\ast$-algebra and $X$ fails the Schur property.

Throughout this article, we regard $X$ as a subspace of $X^{**}$ via the canonical embedding. We use the notation $X_1$ to denote the closed unit ball of $X$ and $\partial_e X_1$ to denote the set of all linear extreme points of $X_1$ (whenever it is nonempty). A linear extreme point of $X_1$ that remains a linear extreme point of $X^{**}_1$ is called a \textbf{weak$^*$-extreme point}.
A stronger and well-known notion of extremity is that of a strongly extreme point. Recall that an element $x\in X_1$ is called a \textbf{strongly extreme point} of $X_1$ if, for every $\varepsilon>0$, there exists $\delta>0$ such that whenever $z\in X$ satisfies
\[
\|x\pm z\|\leq 1+\delta,
\]
then $\|z\|\leq \varepsilon$. For further details, we refer to \cite{DHS}.

In \cite{DHS}, Dowling et al. proved that a function $f\in C(\Omega,X)$ is a strongly extreme point of  $C(\Omega,X)_1$ if and only if $f(\omega)$ is a strongly extreme point of $X_1$ for every $\omega\in\Omega$. Later, in a seminal work, on the geometry of $WC(\Omega, X)_1$, Hu and Smith \cite{HS}
showed that extreme points need not take extreme values.
To address this phenomenon, Hu and Smith introduced the notion of a uniformly strongly extreme subset of $X_1$. A subset $D$ of $X_1$ is said to be \textbf{uniformly strongly extreme} if, for every $\varepsilon>0$, there exists $\delta>0$ such that, for all $x\in D$, if 
\[
\|x\pm z\|\leq 1+\delta
\]
for $z\in X$, then 
 $\|z\|\leq \varepsilon$.  Clearly, any subset of $D$ is a uniformly strongly extreme set. In particular, all elements of $D$ are strongly extreme points of $X_1$. They proved that $f\in WC(\Omega,X)_1$ is a strongly extreme point of $WC(\Omega,X)_1$ if and only if there exists a dense $G_\delta$ subset $\Gamma$ of $\Omega$ such that $f(\Gamma)$ is a uniformly strongly extreme subset of $X_1$ \cite[Theorem~5]{HS}. Since 
 we are interested in the operators whose adjoints map linear extreme points to $C^*$-extreme points, we need to understand the interplay between linear extremal structure and $C^*$- extremal structure. We add the adjective linear extreme while dealing with $C^*$-algebras. Recall   that (\cite{LP}) for a unital $C^*$-algebra $\cA$ with identity $\mathbf{1}$,
 an element $x \in \cA_1$ is said to be a \textbf{$C^*$-convex combination} of $k$ elements $x_1,\dots, x_k\in \cA_1$, if there exist $t_1, \dots, t_k\in \cA$ such that $\sum_{i=1}^kt_i^*t_i=\bold{1}$ and $x=\sum_{i=1}^kt_i^*x_it_i$. The $t_i$'s are known as the \textbf{coefficients} of this $C^*$- convex combination. If the coefficients, i.e., the $t_i$'s are invertible, then this $C^*$-convex combination is called a \textbf{proper $C^*$- convex combination}. 
 $x\in \cA_1$ is said to be a \textbf{$C^*$-extreme point} of $\cA_1$ if, whenever $x$ can be written as a proper $C^*$-convex combination of $x_1, \dots, x_k \in \cA_1$, that is,
\[
x= \sum_{i=1}^kt_i^*x_it_i,
\]
where $t_1,\dots, t_k\in \cA$ are invertible with $\sum_{i=1}^kt_i^*t_i=\bold{1}$, then each $x_i$ is unitarily equivalent to $x$, i.e., there exist unitaries $u_1, \dots, u_k\in \cA$ such that $x_i=u_i^*xu_i$ for $i=1, \dots, k$. With this notation, the main result of this article is the following theorem. 

\begin{letterthm}
     \label{t:A}
    Let $X$ be a Banach space such that $X^{**}$ is isometric to a $C^*$-algebra. Let $T\in \cW(X^\ast, C(\Omega))_1$ be such that $T^*$ maps linear extreme points of $C(\Omega)^*_1$ to the $C^*$-extreme points of $X^{**}_1$. Then $T$ is a strongly extreme point of $\cW(X^\ast, C(\Omega))_1$.  Moreover, the set of all such operators forms a uniformly strongly extreme subset of $\cW(X^\ast, C(\Omega))_1$.
\end{letterthm}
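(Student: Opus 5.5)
Identify $T\in\cW(X^*,C(\Omega))$ with the function $f=T^*\circ\delta\in WC(\Omega,X^{**})$, $f(\omega)=T^*(\delta(\omega))$, which is isometric for the supremum norm. The linear extreme points of $C(\Omega)^*_1$ are the functionals $\lambda\delta(\omega)$ with $|\lambda|=1$ and $\omega\in\Omega$. So the hypothesis says exactly that $f(\omega)$ is a $C^*$-extreme point of $X^{**}_1$ for every $\omega\in\Omega$. (If $\lambda\neq 1$ one also gets the statement for $\lambda f(\omega)$, but we will not need it.)

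The plan is to show that the set $D$ of all $C^*$-extreme points of $X^{**}_1$ is uniformly strongly extreme, with a $\delta$ depending only on $\varepsilon$. Both assertions of Theorem~\ref{t:A} then follow at once, pointwise in $\omega$, without even invoking \cite[Theorem~5]{HS}. Indeed, suppose $\|f\pm g\|\le 1+\delta$ in $WC(\Omega,X^{**})$. Then $\|f(\omega)\pm g(\omega)\|\le 1+\delta$ for every $\omega$, hence $\|g(\omega)\|\le\varepsilon$ for every $\omega$, and so $\|g\|\le\varepsilon$. Since $\delta$ does not depend on $f$, this gives uniform strong extremity of the whole family of such operators, and in particular each such $T$ is strongly extreme.

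\textbf{Step 1.} Transfer to the $C^*$-algebra $\cA$ isometric to $X^{**}$. Linear extreme points and the norm inequalities above are purely Banach-space notions, so they transfer along the isometry. I would then recall the known fact (Hopenwasser--Moore--Paulsen) that a $C^*$-extreme point of $\cA_1$ is a linear extreme point of $\cA_1$. By Kadison's theorem, such a point is a partial isometry $v$ with $pAq=0$ for all $A\in\cA$, where $p=\mathbf 1-vv^*$ and $q=\mathbf 1-v^*v$. The delicate point is the interpretation of ``$C^*$-extreme'' when the $C^*$-structure is only given up to isometry. I will take the definition in the algebra $\cA$; the conclusion only uses linear extremality, which is isometry-invariant.

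\textbf{Step 2.} This is the key estimate: for such $v$ and any $z\in\cA$, the condition $\|v\pm z\|\le1+\delta$ forces $\|z\|\le 2\sqrt{2\delta+\delta^2}$. By the parallelogram identity,
\[
(v+z)^*(v+z)+(v-z)^*(v-z)=2(v^*v+z^*z)\le 2(1+\delta)^2\mathbf 1 .
\]
Hence $z^*z\le(2\delta+\delta^2)\mathbf 1+q$. Compressing by $e=v^*v$ gives $\|ze\|^2=\|e z^*z e\|\le 2\delta+\delta^2$. The same argument with $zz^*$ gives $zz^*\le(2\delta+\delta^2)\mathbf 1+p$, hence $\|(\mathbf 1-p)z\|^2\le 2\delta+\delta^2$.

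Now write $z=ze+zq$. Since $pzq=0$, we have $zq=(\mathbf 1-p)zq$, so $\|zq\|\le\|(\mathbf 1-p)z\|$. Therefore $\|z\|\le 2\sqrt{2\delta+\delta^2}$. Given $\varepsilon>0$, choose $\delta>0$ with $2\sqrt{2\delta+\delta^2}\le\varepsilon$.

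The main obstacle I anticipate is Step 1: making sure that the cited implication ``$C^*$-extreme $\Rightarrow$ linear extreme'' holds in an arbitrary unital $C^*$-algebra, since $X^{**}$ is unital as a von Neumann algebra. If needed, I would prove it directly. Assume $x=\tfrac12(x_1+x_2)$ is a nontrivial linear decomposition. Write $x_i=x+(-1)^i y$, and use the scalar coefficients $t_1=t_2=\tfrac1{\sqrt2}\mathbf 1$, which are invertible. $C^*$-extremality then makes $x\pm y$ unitarily equivalent to $x$. I would then try to derive $y=0$ from a spectral or norm argument on $(x\pm y)^*(x\pm y)$. The fallback is to rely on the standard literature result.
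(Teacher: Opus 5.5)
Your proposal is correct and follows the paper's route. You get $C^*$-extreme $\Rightarrow$ linear extreme from the literature. The paper uses \cite[Theorem~B]{HR} for von Neumann algebras, which is the version you need, since $X^{**}$ is a dual space and hence a W$^*$-algebra. You then combine Kadison's characterization with the parallelogram law to get one $\delta$ for all linear extreme points; your operator-inequality computation is an intrinsic, GNS-free form of Theorem~\ref{t:unif-st-ext-of-A_1}. The pointwise passage via $\|S\|=\sup_{\omega}\|S^*\delta(\omega)\|$ is slightly more direct than the paper's Krein--Milman/weak-compactness step.

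Keep the citation rather than your scalar-coefficient fallback, which cannot work. In $B(\ell^2)$, take a diagonal $x$ whose entries run through every rational of $[0,1]$ infinitely often. Then $x\pm\min(x,\mathbf 1-x)$ are both unitarily equivalent to $x$, although $x$ is not a linear extreme point.
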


A proof of Theorem \ref{t:A} is given in Section \ref{sec:main-result}.
\vspace{.3cm}

 Section \ref{sec:general space} deals with extremal behavior of general spaces of operators.
 If $X$ is uniformly convex and uniformly smooth, then in Theorem~\ref{t:unif-convex}, we show that the collection of isometries and coisometries in $B(X)_1$ forms a uniformly strongly extreme subset of $B(X)_1$. One of our main results of this section is the following theorem.
\begin{letterthm}
    \label{t:B}
    Let $X$ be a uniformly convex and uniformly smooth Banach space. Let $f\in WC(\Omega, B(X))_1$ be such that $f(\omega)$ is an isometry or a coisometry in $B(X)_1$ for every $\omega\in \Omega$. Then $f$ is a strongly extreme point of $WC(\Omega, B(X))_1$. In particular, the collection of all such functions forms a uniformly strongly extreme subset of $WC(\Omega, B(X))_1$.
\end{letterthm}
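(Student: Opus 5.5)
Theorem~\ref{t:B} is proved by reducing to a pointwise, uniform statement in $B(X)_1$ and then invoking the Hu--Smith characterization \cite[Theorem~5]{HS}. The pointwise statement is Theorem~\ref{t:unif-convex}: the set $D$ of all isometries and coisometries in $B(X)_1$ is uniformly strongly extreme. Granting this, take $f\in WC(\Omega,B(X))_1$ with $f(\Omega)\subseteq D$. Then $f(\Gamma)\subseteq D$ for $\Gamma=\Omega$, which is trivially a dense $G_\delta$ subset. Since any subset of a uniformly strongly extreme set is again uniformly strongly extreme, \cite[Theorem~5]{HS} shows that $f$ is a strongly extreme point of $WC(\Omega,B(X))_1$.

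For the uniform statement about functions, I will argue directly rather than through \cite{HS}, so that the $\delta$ is visibly independent of $f$. Fix $\varepsilon>0$ and let $\delta=\delta(\varepsilon)$ be the constant that Theorem~\ref{t:unif-convex} supplies for $D$. Suppose $g\in WC(\Omega,B(X))$ satisfies $\|f\pm g\|_\infty\le 1+\delta$. Because the norm is the supremum norm, for every $\omega\in\Omega$ we get $\|f(\omega)\pm g(\omega)\|\le 1+\delta$. Since $f(\omega)\in D$, this gives $\|g(\omega)\|\le\varepsilon$, and taking the supremum over $\omega$ gives $\|g\|_\infty\le\varepsilon$. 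Thus the collection of all such $f$ is uniformly strongly extreme with the same $\delta$ as $D$. The first assertion of the theorem also follows, since each such $f$ is then a strongly extreme point. Weak continuity of $f$ plays no role beyond membership in the space.

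The substantive step, which I expect to be the main obstacle, is Theorem~\ref{t:unif-convex} itself. Take first the isometry case: $\|Sx\|=\|x\|$ and $\|S\pm Z\|\le 1+\delta$. For a unit vector $x$, the vectors $Sx\pm Zx$ have norm at most $1+\delta$, while their midpoint $Sx$ has norm $1$. Uniform convexity, with modulus $\delta_X$, then forces $\|Zx\|\le\varepsilon$ as soon as $\delta_X(2\varepsilon/(1+\delta))$ exceeds roughly $\delta/(1+\delta)$. This holds uniformly in $x$ and in $S$, so $\|Z\|\le\varepsilon$.

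The coisometry case dualizes this. Here I take a coisometry to mean that $S^*$ is an isometry on $X^*$. Then $\|S^*\pm Z^*\|\le 1+\delta$, and since $X$ is uniformly smooth, $X^*$ is uniformly convex. The same argument in $X^*$ gives $\|Z^*\|\le\varepsilon$, hence $\|Z\|\le\varepsilon$. The only care needed is to make sure the paper's definition of coisometry matches "$S^*$ is an isometry". If instead it means that $S$ maps the open unit ball onto the open unit ball (a quotient map), one identifies this with $S^*$ being an isometry via the standard duality, and that identification would be the step to check.
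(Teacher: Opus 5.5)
Your proposal is correct and follows essentially the paper's route: uniform strong extremality of isometries and coisometries in $B(X)_1$ (Theorem~\ref{t:unif-convex}, obtained from Lemma~\ref{lem:unit-sphere-unif-st-ext} and by dualizing, where the paper defines a coisometry exactly as an operator $T$ with $T^*$ an isometry), followed by \cite[Theorem~5]{HS} and the pointwise sup-norm argument of Corollary~\ref{cor:unif-st-ext-wk}. Your deviations are cosmetic: you use the modulus of convexity directly instead of the contradiction argument of Lemma~\ref{lem:unit-sphere-unif-st-ext}, and you note that with $\Gamma=\Omega$ the sup-norm argument already gives strong extremality of each $f$, so \cite{HS} is not needed.
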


A proof of Theorem \ref{t:B} is given in Section \ref{sec:general space}.

\section{Unital $C^*$-Algebras}
\label{sec:main-result}

We begin this investigation by showing that the set of all linear extreme points of the closed unit ball of a $C^*$-algebra $\cA$ is uniformly strongly extreme. For a Hilbert space $H$, let $B(H)$ denote the $C^*$-algebra of all bounded linear operators on $H$.
\begin{theorem}
    \label{t:unif-st-ext-of-A_1}
    Let  $\cA_1$ denote the closed unit ball of $\cA$. Then $\partial_e \cA_1$ is a uniformly strongly extreme subset of $\cA_1$.
\end{theorem}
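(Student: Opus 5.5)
The plan is to use Kadison's description of the extreme points of $\cA_1$ and obtain an explicit estimate of $\|z\|$ in terms of $\delta$ that does not depend on the extreme point. Recall that $v\in\partial_e\cA_1$ if and only if $v$ is a partial isometry with
\[
(\mathbf 1-vv^*)\,\cA\,(\mathbf 1-v^*v)=\{0\}.
\]
If $\cA$ is non-unital, $\partial_e\cA_1$ is empty and there is nothing to prove. Still, to be safe I would do all computations in the unitization (or in $\cA^{**}$), where $\mathbf 1$ makes sense. Put $p=v^*v$ and $q=vv^*$; these are projections, and $vp=v=qv$. Fix $v\in\partial_e\cA_1$ and $z\in\cA$ with $\|v\pm z\|\le 1+\delta$.

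\textbf{Step 1 (average the two inequalities).} From $\|v\pm z\|\le 1+\delta$ we get $(v\pm z)^*(v\pm z)\le(1+\delta)^2\mathbf 1$. Adding the two inequalities cancels the cross terms and gives
\[
p+z^*z\le(1+\delta)^2\mathbf 1 .
\]
Compressing by $p$ yields $pz^*zp\le\bigl((1+\delta)^2-1\bigr)p$, so
\[
\|zp\|\le\sqrt{2\delta+\delta^2}.
\]
Applying the same argument to $(v\pm z)(v\pm z)^*$ gives $q+zz^*\le(1+\delta)^2\mathbf 1$, and compressing by $q$ gives
\[
\|qz\|\le\sqrt{2\delta+\delta^2}.
\]

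\textbf{Step 2 (use extremality to remove the remaining corner).} Write $z=zp+z(\mathbf 1-p)$, and then
\[
z(\mathbf 1-p)=qz(\mathbf 1-p)+(\mathbf 1-q)z(\mathbf 1-p).
\]
Kadison's condition says exactly that $(\mathbf 1-q)z(\mathbf 1-p)=0$. Hence $z(\mathbf 1-p)=qz(\mathbf 1-p)$, and
\[
\|z\|\le\|zp\|+\|qz\|\,\|\mathbf 1-p\|\le 2\sqrt{2\delta+\delta^2}.
\]

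\textbf{Step 3 (choose $\delta$).} Given $\varepsilon>0$, pick $\delta>0$ with $2\sqrt{2\delta+\delta^2}\le\varepsilon$. This $\delta$ depends only on $\varepsilon$, not on $v$, which is precisely the uniform strong extremality of $\partial_e\cA_1$.

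The only delicate point is Step 2, namely justifying Kadison's characterization, including the vanishing of the corner $(\mathbf 1-q)\cA(\mathbf 1-p)$, in the setting used here. If the paper wants to avoid quoting it, I would prove the needed direction directly: if $x=(\mathbf 1-q)a(\mathbf 1-p)\ne0$, then $v\pm x/\|x\|$ both lie in $\cA_1$, because the ranges and supports are orthogonal. This contradicts extremality. The rest of the argument consists of routine operator inequalities.
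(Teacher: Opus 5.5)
Your proof is correct and follows essentially the same route as the paper. Both arguments use Kadison's characterization, decompose $z$ into corners relative to $p=v^*v$ and $q=vv^*$ with $(\mathbf 1-q)z(\mathbf 1-p)=0$, and average the two $\pm$ inequalities (together with their adjoint versions) to get $\|zp\|,\|qz\|\le\sqrt{2\delta+\delta^2}$. The differences are cosmetic: you work with operator inequalities inside the algebra rather than with vectors in a GNS representation, and your triangle-inequality step gives the constant $2$ instead of the paper's $\sqrt{3}$.
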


\begin{proof}
 By the GNS construction, without loss of generality we can assume that $\cA$ is a $C^*$-subalgebra of $B(H)$ for some Hilbert space $H$. Let $\varepsilon>0$ be given. Fix an arbitrary $u \in \partial_e \cA_1$. Then it follows from \cite[Theorem 10.2]{T} that $u$ is a partial isometry satisfying
\[
(\bold{1}-uu^*)\cA (\bold{1}-u^*u)=\{0\}.
\]
Let $p=u^*u$ and $q=uu^*$. Then $H$ admits the decompositions
$H= pH \oplus (\bold{1}-p)H = qH \oplus (\bold{1}-q)H$.
Observe that
for any $z\in \cA$, the matrix decomposition of the operator
$z: pH \oplus (\bold{1}-p)H \to qH \oplus (\bold{1}-q)H$
is
$
\begin{bmatrix}
a & b\\
c & 0
\end{bmatrix}$,
and the matrix of $u$ is
$\begin{bmatrix}
v & 0\\
0 & 0
\end{bmatrix}$,
where $v: pH \to qH$ is unitary. We must find $\delta>0$ such that, the condition $\|u\pm z\|\leq 1+\delta$ for $z\in \cA$ implies that $\|z\|\leq \varepsilon$. Choose
\[
\delta=-1+ \sqrt{1+\frac{\varepsilon^2}{3}}.
\]
Note that $\delta>0$ and is independent of the choice of $u\in \partial_e \cA_1$. Now,
\[
u\pm z= \begin{bmatrix}
v\pm a & \pm b\\
\pm c & 0
\end{bmatrix}.
\]
Take $\xi \in pH$ with $\|\xi\|=1$. Then
\[
(u\pm z)\xi = \begin{bmatrix}
v\pm a & \pm b\\
\pm c & 0
\end{bmatrix} \begin{pmatrix}
\xi \\
0
\end{pmatrix}= \begin{pmatrix}
(v\pm a) \xi\\
\pm c \xi
\end{pmatrix}.
\]
Hence, $\|(u\pm z)\xi\|^2\leq (1+\delta)^2$ implies that
\[
\|(v\pm a)\xi\|^2+ \|c\xi\|^2\leq (1+\delta)^2.
\]
Therefore,
\begin{align}
   & \|(v+ a)\xi\|^2+ \|c\xi\|^2\leq (1+\delta)^2,
   \label{eq:v+}\\
    \text{and}~~   & \|(v - a)\xi\|^2+ \|c\xi\|^2\leq (1+\delta)^2
    \label{eq:v-}.
 \end{align}

Expanding and adding the above two inequalities \eqref{eq:v+} and \eqref{eq:v-}, one obtains
\begin{equation}
\label{eq:unif ineq}
\|v \xi\|^2+\|a \xi\|^2+ \|c \xi\|^2\leq (1+\delta)^2.
\end{equation}
Since $v: pH \to qH$ is unitary, one has $\|v\xi\|=\|\xi\|=1$. Substituting this into \eqref{eq:unif ineq}, we obtains
\[
\|a \xi\|^2 + \|c \xi\|^2\leq 2\delta+ \delta^2.
\]
Thus $\|a\|\leq \gamma$ and $\|c\| \leq \gamma$, where $\gamma= \sqrt{2\delta+ \delta^2}$. Applying the same argument to $u^*$ and $z^*$, we get $\|b\|\leq \gamma$. Therefore, if $\zeta\in H$ satisfies $\|\zeta\|=1$, then writing $\zeta=\xi+\eta$ with $\xi\in pH$ and $\eta \in (\bold{1}-p)H$, we have
\[
z\zeta= \begin{bmatrix}
a & b\\
c & 0
\end{bmatrix} \begin{pmatrix}
\xi\\
\eta
\end{pmatrix}= \begin{pmatrix}
a \xi+ b\eta\\
c \xi
\end{pmatrix}.
\]
Consequently,
\begin{align*}
\|z\zeta\|^2&=\|a \xi+b\eta\|^2+\|c\xi\|^2\\
&\leq (\gamma \|\xi\|+ \gamma \|\eta\|)^2+\gamma^2\|\xi\|^2\\
&\leq 2 \gamma^2(\|\xi\|^2+ \|\eta\|^2)+\gamma^2\|\xi\|^2\\
& \leq 2 \gamma^2(\|\xi\|^2+ \|\eta\|^2)+\gamma^2\|\xi\|^2+ \gamma^2 \|\eta\|^2\\
&= 3 \gamma^2 (\|\xi\|^2+ \|\eta\|^2)\\
&= 3 \gamma^2 \|\zeta\|^2\\
&= 3 (2 \delta+\delta^2)\\
&\leq \varepsilon^2.
\end{align*}
Thus $\|z\|\leq \varepsilon$.
This completes the proof.
\end{proof}

It is well-known that the linear extreme points of the closed unit ball of \(C(\Omega)^*\) are exactly the functionals \(\lambda \delta(\omega\)), where $\lambda$ is a scalar with \(|\lambda|=1\) and \(\delta(\omega\)) is the evaluation functional at \(\omega\in\Omega\). In the next theorem, we use this characterization to obtain some geometric analog of the corresponding results in \cite{LM}. 

\vspace{.3cm}
\begin{proof} [Proof of Theorem~\ref{t:A}]
   Since $X^{**}$ is a $C^*$-algebra (in fact, a von Neumann algebra), it follows from \cite[Theorem~B]{HR} that for every $\omega \in \Omega$ and $|\lambda|=1$, each $T^*(\lambda \delta(\omega))$ is indeed a linear extreme point of $X_1^{**}$, where $\lambda \delta(\omega)$ is defined as above.
    Let
    \[
    D= \{T^*(\lambda \delta(\omega))\in X_1^{**}: \omega \in \Omega ~ \text{and}~ |\lambda|=1\}.
    \]
Thus, $D \subseteq \partial_e X_1^{**}$. Hence, by Theorem~\ref{t:unif-st-ext-of-A_1}, the set \(D\) is uniformly strongly extreme in \(X_1^{**}\).

We now show that \(T\) is strongly extreme. Let \(\varepsilon>0\) be given. Since \(D\) is uniformly strongly extreme, there exists \(\delta>0\) such that whenever \(x\in D\) and \(y\in X^{**}\) satisfy
$\|x\pm y\|\leq 1+\delta$,
then \(\|y\|\leq \varepsilon\).
Now suppose that \(S\in \mathcal{W}(X^*,C(\Omega))\) satisfies
\[
\|T\pm S\|\leq 1+\delta.
\]
Passing to adjoints, we obtain
$\|T^*\pm S^*\|\leq 1+\delta$.
Therefore, for every \(\omega\in\Omega\) and $|\lambda|=1$,
$\|T^*(\lambda \delta(\omega))\pm S^*(\lambda \delta(\omega))\|\leq 1+\delta$.
Since \(T^*(\lambda \delta(\omega))\in D\) and \(D\) is uniformly strongly extreme, it follows that for all $\omega \in \Omega$ and 
$|\lambda|=1$,
\begin{align}
\label{eq:lin-ext}
    \|S^*(\lambda \delta(\omega))\|\leq \varepsilon.
\end{align}
Let $\conv(\partial_e C(\Omega)_1^*)$ denote the set of all convex combination of extreme points of $C(\Omega)_1^*$.
From Inequality \eqref{eq:lin-ext}, one obtains for every $\mu \in \conv(\partial_e C(\Omega)_1^*)$, one has 
\[
\|S^*(\mu)\|\leq \varepsilon.
\]
Let $\mu_0 \in C(\Omega)_1^*$ be arbitrary. Using Krein-Milman theorem, it follows that there exists a net $\mu_\alpha \subseteq \conv(\partial_e C(\Omega)_1^*)$ such that $\mu_\alpha$ converges to $\mu_0$ in weak$^*$-topology. 

Since $S$ is weakly compact, $S^*$ is weakly compact. 
In particular, $S^*$ is weak$^*$-to-weak continuous. 
Hence
$S^*(\mu_\alpha)$ converges to $S^*(\mu_0)$ weakly. 
By the weak lower semicontinuity of the norm, we obtain
\[
\|S^*(\mu_0)\| \leq \liminf_\alpha \|S^*(\mu_\alpha)\| \leq \varepsilon.
\]
As $\mu_0$ was arbitrary, 
it follows that $\|S^*\|\leq \varepsilon$. Consequently, $\|S\|\leq \varepsilon$.
This shows that \(T\) is strongly extreme.

Note that the above $\delta$ in the proof comes from Theorem \ref{t:unif-st-ext-of-A_1}, and is independent of the operator $T$. Therefore, the collection of all such operators forms a uniformly strongly extreme subset of $\mathcal{W}(X^*,C(\Omega))_1$. 
\end{proof}

\begin{corollary}
    \label{cor:unif-st-ext-wk}
    Let $X$ be a Banach space such that $X^{**}$ is isometric to  a $C^*$-algebra.
    Let $\Gamma \subseteq \Omega$ be a dense subset. Consider 
    \[
    K=\{ f\in WC(\Omega,X)_1 : f(\omega) \in \partial_eX_1~~ \text{is a weak$^*$-extreme point for all}~~ \omega \in \Gamma\}.
    \] 
    Then $ K$ is a uniformly strongly extreme subset of $WC(\Omega,X)_1$.
\end{corollary}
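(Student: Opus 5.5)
The plan is to reduce the corollary to Theorem~\ref{t:unif-st-ext-of-A_1} applied to the $C^*$-algebra $X^{**}$, combined with density of $\Gamma$ and weak continuity of the functions in $WC(\Omega,X)$.

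First I record what the hypothesis gives. If $f\in K$ and $\omega\in\Gamma$, then $f(\omega)$ is a weak$^*$-extreme point, so $f(\omega)\in\partial_e X^{**}_1$ (regarding $X\subseteq X^{**}$ canonically). Let $\varepsilon>0$ and choose $\delta>0$ from Theorem~\ref{t:unif-st-ext-of-A_1} for the $C^*$-algebra $X^{**}$. This $\delta$ is independent of the point of $\partial_e X^{**}_1$, hence independent of $f\in K$ and of $\omega\in\Gamma$.

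Next, fix $f\in K$ and $g\in WC(\Omega,X)$ with $\|f\pm g\|_\infty\le 1+\delta$. For each $\omega\in\Gamma$ we have $\|f(\omega)\pm g(\omega)\|\le 1+\delta$ in $X$. Since the canonical embedding is isometric, the same inequality holds in $X^{**}$. As $f(\omega)\in\partial_e X^{**}_1$, Theorem~\ref{t:unif-st-ext-of-A_1} gives $\|g(\omega)\|\le\varepsilon$ for all $\omega\in\Gamma$.

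Finally, I extend this bound to all of $\Omega$. Given $\omega_0\in\Omega$, density of $\Gamma$ provides a net $\omega_\alpha\in\Gamma$ with $\omega_\alpha\to\omega_0$. Because $g$ is weakly continuous, $g(\omega_\alpha)\to g(\omega_0)$ weakly in $X$. Weak lower semicontinuity of the norm then gives $\|g(\omega_0)\|\le\liminf_\alpha\|g(\omega_\alpha)\|\le\varepsilon$. Hence $\|g\|_\infty\le\varepsilon$, and since $\delta$ did not depend on $f$, the set $K$ is uniformly strongly extreme.

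The only delicate point is this passage from $\Gamma$ to $\Omega$, where norm continuity is unavailable. The argument uses only weak lower semicontinuity of the norm, as in the proof of Theorem~\ref{t:A}, so it goes through.
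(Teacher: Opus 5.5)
Your proof is correct and follows the paper's argument essentially step for step. Theorem~\ref{t:unif-st-ext-of-A_1}, applied in the $C^*$-algebra $X^{**}$, gives a single $\delta$ that works for every weak$^*$-extreme value $f(\omega)$ with $\omega\in\Gamma$, so $\|g(\omega)\|\le\varepsilon$ on $\Gamma$; this bound then passes to all of $\Omega$ by density of $\Gamma$ and lower semicontinuity of $\omega\mapsto\|g(\omega)\|$ for weakly continuous $g$, of which your net argument is just an explicit version.
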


\begin{proof}
  For each $f\in  K$, it follows from the hypothesis that $f(\omega) \in \partial_eX_1^{**}$ 
  for every $\omega \in \Gamma$. Therefore, by Theorem \ref{t:unif-st-ext-of-A_1}, the set
\[
K_f= \{f(\omega): \omega \in \Gamma\}
\]
    is a uniformly strongly extreme subset of $X_1^{**}$. In other words, given $\varepsilon>0$ there exists $\delta>0$ such that for all $f(\omega) \in  K_f$, if $\|f(\omega) \pm x^{**}\|\leq 1+\delta$ for $x^{**} \in X^{**}$, then $\|x^{**}\|\leq \varepsilon$. 
   Observe that this $\delta$ is independent of the choice of $f\in  K$. We now show that $ K$ is uniformly strongly extreme. Fix $\varepsilon>0$, and let $\delta>0$ be the number obtained above for the set $ K_f$.
    Let $f_0\in K$ be arbitrary and suppose that $\|f_0\pm g\|\leq 1+\delta$ for some
    $g\in WC(\Omega, X)$. Then
     $\text{sup}_{\omega\in \Omega}\|f(\omega)\pm g(\omega)\|\leq 1+\delta$. In particular,
    $\text{sup}_{\omega\in \Gamma}\|f(\omega)\pm g(\omega)\|\leq 1+\delta$. 
   Since $ K_{f_0}$ is uniformly strongly extreme, it follows that $\text{sup}_{\omega\in \Gamma}\|g(\omega)\|\leq \varepsilon$.
    Now, since $g:\Omega\to X$ is weakly continuous, the map $\omega\mapsto \|g(\omega)\|$ is lower semicontinuous. Because $\Gamma$ is dense in $\Omega$, we obtain
   \[
\sup_{\omega\in\Omega}\|g(\omega)\|\le \varepsilon,
\]
that is, $\|g\|\leq \varepsilon$.
   This completes the proof.
\end{proof}

Let $\Omega'$ be a compact Hausdorff space. Consider $WC(\Omega', WC(\Omega, X))$ of weakly continuous functions from $\Omega'$ to $WC(\Omega, X)$ equipped with supremum norm. Let us note that if we consider $C(\Omega', C(\Omega, X))$, then this space can be identified with $C(\Omega \times \Omega', X)$ but similar result for $WC(\Omega', WC(\Omega, X))$ need not hold in general.  Therefore, the below Corollary \ref{c:iteration} does not follow directly from any of the previous result.
\begin{corollary}
    \label{c:iteration}
    Let $X$ be a Banach space such that $X^{**}$ is isometric to a $C^*$-algebra. Let $\Omega'$ be a compact Hausdorff space. Consider the set $K$ defined as in Corollary \ref{cor:unif-st-ext-wk}. Let $F\in WC(\Omega', WC(\Omega, X))_1$ be such that $F(\omega')\in K$ for all $\omega'\in \Omega'$. Then $F$ is a strongly extreme point of $WC(\Omega', WC(\Omega, X))_1$.
\end{corollary}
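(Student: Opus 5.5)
The proof will reduce to Corollary~\ref{cor:unif-st-ext-wk} in the same way that Theorem~\ref{t:A} reduced to Theorem~\ref{t:unif-st-ext-of-A_1}: first pass from the outer variable $\omega'$ to values in $K$, then use weak continuity in $\omega'$ to control the norm.

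\textbf{Step 1: choose $\delta$.} Fix $\varepsilon>0$. By Corollary~\ref{cor:unif-st-ext-wk}, $K$ is a uniformly strongly extreme subset of $WC(\Omega,X)_1$. So there is a $\delta>0$, depending only on $\varepsilon$, such that for every $f\in K$ and every $g\in WC(\Omega,X)$,
\[
\|f\pm g\|\le 1+\delta \quad\Longrightarrow\quad \|g\|\le\varepsilon .
\]

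\textbf{Step 2: apply it pointwise in $\omega'$.} Let $G\in WC(\Omega',WC(\Omega,X))$ satisfy $\|F\pm G\|\le 1+\delta$ in the supremum norm. For each $\omega'\in\Omega'$,
\[
\|F(\omega')\pm G(\omega')\|\le 1+\delta .
\]
Since $F(\omega')\in K$ by hypothesis and $G(\omega')\in WC(\Omega,X)$, Step 1 gives $\|G(\omega')\|\le\varepsilon$ for every $\omega'$.

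\textbf{Step 3: conclude.} Taking the supremum over $\omega'\in\Omega'$ gives $\|G\|\le\varepsilon$. Hence $F$ is a strongly extreme point of $WC(\Omega',WC(\Omega,X))_1$.

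Since $\delta$ does not depend on $F$, this argument also shows that the collection of all such $F$ is a uniformly strongly extreme subset.

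The only point needing care is that the estimate in Step 2 must hold at every $\omega'$, not only on a dense subset. Here that is automatic, because the hypothesis $F(\omega')\in K$ is assumed for all $\omega'\in\Omega'$. If it were only assumed on a dense set, one would instead argue as in Corollary~\ref{cor:unif-st-ext-wk}: $\omega'\mapsto\|G(\omega')\|$ is weakly lower semicontinuous, so the bound $\|G(\omega')\|\le\varepsilon$ on a dense set extends to all of $\Omega'$.

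The substantive input is therefore Corollary~\ref{cor:unif-st-ext-wk}, and in particular the fact that its $\delta$ is uniform in $f\in K$. That uniformity is what makes the pointwise bounds in Step 2 combine into a bound on the supremum, and no separate identification of $WC(\Omega',WC(\Omega,X))$ with a space of functions on $\Omega\times\Omega'$ is needed.
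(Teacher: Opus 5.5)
Your proof is correct and follows the paper's route: both rest on Corollary~\ref{cor:unif-st-ext-wk}, which makes $K$ (and hence $F(\Omega')$) a uniformly strongly extreme subset of $WC(\Omega,X)_1$. The only difference is that the paper then cites \cite[Theorem~5]{HS}, whereas you check the needed easy direction of that theorem (with $\Gamma=\Omega'$) directly by the pointwise estimate; this makes your argument self-contained and also gives the uniform statement for the whole collection of such $F$.
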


\begin{proof}
    By Corollary \ref{cor:unif-st-ext-wk}, the set $K$ is a uniformly strongly extreme subset of $WC(\Omega,X)_1$.
   Since $F(\Omega)\subseteq K$, it follows that $F(\Omega)$ is also uniformly strongly extreme. Hence, by \cite[Theorem~5]{HS}, $F$ is a strongly extreme point of $WC(\Omega', WC(\Omega, X))_1$.
\end{proof}

In \cite[Corollary~9]{HS}, Hu and Smith proved that if $f\in C(\Omega, X)_1$ is strongly extreme, then it is also strongly extreme in $WC(\Omega, X)_1$. The following corollary generalizes this result in the context of uniformly strongly extreme for the case when $X^{**}$ is a $C^*$-algebra.

\begin{corollary}
    \label{cor:unif-st-ext-cts}
    Let X be a Banach space such that $X^{**}$ is isometric to  a $C^*$-algebra. Consider
    \[
  L=  \{f\in C(\Omega, X)_1 : f~~ \text{is strongly extreme}\}.
    \]
    Then $L$ is a uniformly strongly extreme subset of $WC(\Omega, X)_1$.
\end{corollary}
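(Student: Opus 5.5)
The plan is to reduce to Corollary~\ref{cor:unif-st-ext-wk}, or more precisely to its argument, using the pointwise characterization of strong extremity in $C(\Omega,X)_1$ due to Dowling et al.\ \cite{DHS}. Let $f\in L$. By \cite{DHS}, $f(\omega)$ is a strongly extreme point of $X_1$ for every $\omega\in\Omega$. The key step is to show that such a value is a linear extreme point of $X^{**}_1$, i.e.\ a weak$^*$-extreme point. Once this is done, $f$ belongs to the set $K$ of Corollary~\ref{cor:unif-st-ext-wk} with $\Gamma=\Omega$, and the uniform strong extremity of $L\subseteq K$ follows.

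For the key step, we use the standard fact that a point $x\in X_1$ is strongly extreme in $X_1$ if and only if it is extreme in $X^{**}_1$. To see the direction we need, suppose $x\pm z^{**}\in X^{**}_1$ with $z^{**}\neq 0$, and set $\varepsilon=\|z^{**}\|/2$. By Goldstine's theorem there is a net $z_\alpha\in X$ with $z_\alpha\to z^{**}$ weak$^*$. Using the principle of local reflexivity (or Goldstine applied to the pair $(x+z^{**},x-z^{**})\in (X\oplus_\infty X)^{**}$), we may choose the net so that $\|x\pm z_\alpha\|\le 1+\delta$ for any prescribed $\delta>0$. Weak$^*$ lower semicontinuity of the norm gives $\|z_\alpha\|>\varepsilon$ eventually, which contradicts strong extremity. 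Hence $f(\omega)\in\partial_e X^{**}_1$ for all $\omega$. This use of local reflexivity, which arranges $\|x\pm z_\alpha\|$ simultaneously close to $\le 1$, is the delicate point. The $\ell_\infty$-sum trick handles it cleanly: the unit ball of $X\oplus_\infty X$ is weak$^*$-dense in that of its bidual, so we approximate $(x+z^{**},x-z^{**})$ by $(a_\alpha,b_\alpha)$ in the unit ball, and set $z_\alpha=(a_\alpha-b_\alpha)/2$ and $x_\alpha=(a_\alpha+b_\alpha)/2\to x$ weak$^*$.

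This creates a further issue: $x_\alpha$ converges to $x$ only weakly, not in norm, so we need $\|x\pm z_\alpha\|$ small in excess, not just $\|x_\alpha\pm z_\alpha\|$. If this becomes problematic, we bypass it by using directly that $\{f(\omega)\}\subseteq \partial_e X^{**}_1$ is what Theorem~\ref{t:unif-st-ext-of-A_1} needs, and argue as follows. Given $\varepsilon$, take the uniform $\delta$ of Theorem~\ref{t:unif-st-ext-of-A_1}. For $g\in WC(\Omega,X)$ with $\|f\pm g\|\le 1+\delta$, we have $\|f(\omega)\pm g(\omega)\|\le 1+\delta$ in $X^{**}$, so $\|g(\omega)\|\le\varepsilon$ for every $\omega$, i.e.\ $\|g\|\le\varepsilon$.

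To establish $f(\omega)\in\partial_eX^{**}_1$ robustly, the first thing to try is the well-known equivalence (cf.\ \cite{DHS}) that the strongly extreme points of $X_1$ are exactly the points of $X_1$ that are extreme in $X^{**}_1$. We would cite this, and otherwise prove it via the $\ell_\infty$-sum Goldstine argument above, correcting the weak-versus-norm gap by convex combinations (Mazur) so that $x_\alpha\to x$ in norm. Since $\delta$ depends only on $\varepsilon$, the conclusion holds uniformly over $L$.
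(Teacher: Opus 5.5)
Your proposal is correct and is essentially the paper's argument. \cite{DHS} makes each $f(\omega)$ strongly extreme in $X_1$, hence extreme in $X^{**}_1$, and then Theorem~\ref{t:unif-st-ext-of-A_1} together with the pointwise argument of Corollary~\ref{cor:unif-st-ext-wk} (with $\Gamma=\Omega$) finishes the proof. The paper gets the middle step from the fact that strongly extreme points stay strongly extreme in the bidual, while you get it directly from local reflexivity; since that operator fixes $x$, there is no weak-versus-norm gap, so the Goldstine/Mazur detour is unnecessary.

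Do not cite this step as an equivalence, however. A point of $X_1$ that is extreme in $X^{**}_1$ need not be strongly extreme: any strictly convex reflexive space that is not midpoint locally uniformly rotund is a counterexample. You only ever use the true implication, so the argument stands.
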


\begin{proof}
    For each $f\in L$, $f$ is a strongly extreme point of $C(\Omega, X)_1$. Hence, by \cite{DHS}, $f(\omega)$ is strongly extreme for every $\omega \in \Omega$. Moreover, using the fact that every strongly extreme point of $X_1$ remains strongly extreme in $X_1^{**}$, and that $X^{**}$ is a $C^*$-algebra, it follows from \cite[Theorem~B]{HR} and Theorem \ref{t:unif-st-ext-of-A_1} that the set
\[
\{f(\omega): \omega\in\Omega\}
\]
    is a uniformly strongly extreme subset of $X_1^{**}$.
    Applying the same argument as in Corollary \ref{cor:unif-st-ext-wk}, one concludes that the set $L$ is uniformly strongly extreme.
\end{proof}

The next proposition shows that every uniformly strongly extreme subset of $X_1$ remains uniformly strongly extreme when regarded as a subset of $X_1^{**}$.

\begin{proposition}
    \label{p:unif-remain-in-bidual}
    Let $X$ be a Banach space. Let $D\subseteq X_1$ be a uniformly strongly extreme subset. Then $D$ is also a uniformly strongly extreme subset of  $X_1^{**}$. 
\end{proposition}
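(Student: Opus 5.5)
We plan to use the principle of local reflexivity to push a perturbation in $X^{**}$ back down to a perturbation in $X$ that is controlled in norm. Fix $\varepsilon>0$. Since $D$ is uniformly strongly extreme in $X_1$, there is $\delta'>0$ such that for every $x\in D$ and $z\in X$, the condition $\|x\pm z\|\le 1+\delta'$ forces $\|z\|\le\varepsilon$. We then choose $\delta>0$ and $\eta>0$ with $(1+\eta)(1+\delta)\le 1+\delta'$; for instance $\delta=\eta=\sqrt{1+\delta'}-1$. These numbers depend only on $\varepsilon$ and not on the point of $D$, so uniformity will be automatic once the estimate is proved.

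Now let $x\in D$ and $z^{**}\in X^{**}$ satisfy $\|x\pm z^{**}\|\le 1+\delta$. To show $\|z^{**}\|\le\varepsilon$, it suffices to show $|z^{**}(f)|\le \varepsilon$ for every $f\in X^*$ with $\|f\|\le 1$. Fix such an $f$. Apply the principle of local reflexivity to the finite-dimensional subspace $E=\operatorname{span}\{x,z^{**}\}\subseteq X^{**}$ and the finite set $\{f\}\subseteq X^*$. This gives a linear map $P:E\to X$ with the following properties:
\begin{itemize}
\item $\|P\|\le 1+\eta$;
\item $Pe=e$ for all $e\in E\cap X$, and in particular $Px=x$;
\item $f(Pz^{**})=z^{**}(f)$.
\end{itemize}
Setting $z=Pz^{**}\in X$, we get
\[
\|x\pm z\|=\|P(x\pm z^{**})\|\le (1+\eta)(1+\delta)\le 1+\delta' .
\]
Hence $\|z\|\le\varepsilon$ by the choice of $\delta'$. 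Therefore
\[
|z^{**}(f)|=|f(z)|\le \|z\|\le\varepsilon .
\]
Taking the supremum over all $f$ in the unit ball of $X^*$ gives $\|z^{**}\|\le\varepsilon$.

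The only delicate point is making the single approximant $z$ satisfy both sign conditions at once while keeping $x$ fixed. A plain Goldstine approximation of $z^{**}$ would give the inequalities $\|x\pm z_\alpha\|\le 1+\delta'$ only in a weak$^*$-lower-semicontinuous sense, which points the wrong way. Local reflexivity avoids this, because $P$ fixes $x$ exactly and acts linearly on all of $E$. We expect this step to be the main obstacle. Once it is in place, the rest of the argument is bookkeeping of the constants.
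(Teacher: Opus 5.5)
Your proof is correct and follows essentially the paper's route: apply local reflexivity to $E=\operatorname{span}\{x,z^{**}\}$ with $x$ held fixed, push $x\pm z^{**}$ down to $X$ at the cost of a factor $1+\eta$, and invoke the uniform strong extremality of $D$ in $X_1$. The only difference is how you return to $X^{**}$: you use the compatibility clause $f(Pz^{**})=z^{**}(f)$ and take a supremum over $f$ in the unit ball of $X^*$, whereas the paper uses the bound $\|T^{-1}\|\le 1+\gamma$ and therefore needs the auxiliary $\rho<\varepsilon$ in its constants, while your version yields $\|z^{**}\|\le\varepsilon$ directly.
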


\begin{proof}
Let $J: X \to X^{**}$ denote the canonical embedding.
Let \(\varepsilon>0\) be arbitrary. Choose \(0<\rho<\varepsilon\). Since \(D\) is a uniformly strongly extreme subset of \(X_1\), there exists \(\delta>0\) such that for every \(x\in D\), whenever 
$\|x\pm y\|\le 1+\delta$ for $y\in X$ implies  $\|y\|\le \rho$.
Fix \(\gamma>0\) so that
\[
(1+\gamma)\left(1+\frac{\delta}{2}\right)<1+\delta
\qquad\text{and}\qquad
(1+\gamma)\rho<\varepsilon.
\]
Choose \(\delta'=\delta/2\).
Now, let \(x_0\in D\) be arbitrary, and let \(y^{**}\in X^{**}\) be such that
\[
\|J(x_0)\pm y^{**}\|\le 1+\delta'.
\]
Consider the finite-dimensional subspace
\[
E=\operatorname{span}\{J(x_0),y^{**}\}\subset X^{**}.
\]
By the Principle of Local Reflexivity (\cite{JL}), there exists an injective linear operator \(T:E\to X\) satisfying
\[
T(J(x_0))=x_0
\qquad\text{and}\qquad
\|T\|,\ \|T^{-1}\|\le 1+\gamma.
\]
Let \(y_0=T(y^{**})\). Then
\[
\|x_0\pm y_0\|
=\|T(J(x_0)\pm y^{**})\|
\le \|T\|\,\|J(x_0)\pm y^{**}\|
\le (1+\gamma)\left(1+\delta'\right)
<1+\delta.
\]
Hence, by the choice of \(\delta\),
$\|y_0\|\le \rho$.
Consequently,
\[
\|y^{**}\|
=\|T^{-1}(y_0)\|
\le \|T^{-1}\|\,\|y_0\|
\le (1+\gamma)\rho
<\varepsilon.
\]
Thus, whenever for $x\in D$,
\(
\|J(x)\pm y^{**}\|\le 1+\delta'\) for $y^{**}\in X^{**}$, we have
$\|y^{**}\|\le \varepsilon$.
This proves that \(D\) is a uniformly strongly extreme subset of \(X^{**}_1\).
\end{proof}

\section{General Spaces of Operators}
\label{sec:general space}
In this section, we prove Theorem \ref{t:B}.
Recall that a Banach space $X$ is \textbf{uniformly convex} if, for every $\varepsilon >0$ there exists $\delta >0$ such that whenever $\|x\|,\|y\|\leq 1$ and $\|x-y\| \geq \varepsilon$, one has $\|x+y\|\leq 2(1-\delta)$. A Banach space $X$ is \textbf{uniformly smooth} if its dual space $X^*$ is uniformly convex. See \cite{D}. 
For $1<p<\infty$, $L_p(\mu)$ spaces are well-known examples of spaces that are both uniformly convex and uniformly smooth.
An operator $T \in B(X)$ is called a coisometry if $T^\ast \in B(X^\ast)$ is an isometry. 
 The following lemma
 is easy to see. For completeness, we provide a proof.
\begin{lemma}
    \label{lem:unit-sphere-unif-st-ext}
    Let $X$ be a uniformly convex Banach space. Then the set 
    \[
    S_X=\{ x\in X_1: \|x\|=1\}
    \]
    is a uniformly strongly extreme subset of $X_1$.
\end{lemma}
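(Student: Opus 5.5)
The plan is to derive the claim directly from the modulus of convexity $\delta_X(\varepsilon)$ of $X$. The key observation is that $\delta_X(\varepsilon)$ depends only on $\varepsilon$ and not on the particular point of the sphere, so the resulting constant is automatically uniform over $S_X$.

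Fix $\varepsilon>0$. We may assume $\varepsilon\le 2$, since shrinking $\varepsilon$ only strengthens the conclusion. Let $\eta>0$ be the number given by uniform convexity for the separation $\varepsilon$: whenever $\|u\|,\|w\|\le 1$ and $\|u-w\|\ge\varepsilon$, we have $\|u+w\|\le 2(1-\eta)$. We will choose $\delta>0$ small, depending only on $\varepsilon$ and $\eta$.

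Now let $x\in S_X$ and $z\in X$ satisfy $\|x\pm z\|\le 1+\delta$. Put $u=(x+z)/(1+\delta)$ and $w=(x-z)/(1+\delta)$, so that $\|u\|,\|w\|\le 1$. Then
\[
\|u+w\|=\frac{2\|x\|}{1+\delta}=\frac{2}{1+\delta},
\qquad
\|u-w\|=\frac{2\|z\|}{1+\delta}.
\]
Suppose, for contradiction, that $\|z\|>\varepsilon$. We want to force $\|u-w\|\ge\varepsilon$, which requires $2\|z\|/(1+\delta)\ge\varepsilon$. This holds as soon as $\delta\le 1$, because then $2\|z\|/(1+\delta)\ge\|z\|>\varepsilon$.

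Uniform convexity then gives
\[
\frac{2}{1+\delta}\le 2(1-\eta),
\qquad\text{that is,}\qquad
1\le(1+\delta)(1-\eta).
\]
This fails once $\delta$ is chosen so that $(1+\delta)(1-\eta)<1$. So we take
\[
\delta=\min\Bigl\{1,\ \tfrac{\eta}{2(1-\eta)}\Bigr\},
\]
or simply $\delta=\min\{1,\eta/2\}$. With either choice we get $(1+\delta)(1-\eta)\le (1+\eta/2)(1-\eta)<1$. This contradiction shows $\|z\|\le\varepsilon$.

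Since $\delta$ was chosen from $\varepsilon$ alone, independently of $x\in S_X$, the set $S_X$ is uniformly strongly extreme. There is no real obstacle here. The only care needed is in the rescaling by $1+\delta$: it must keep the separation $\|u-w\|$ above $\varepsilon$ (handled by $\delta\le 1$), while the norm of the midpoint $\|u+w\|/2=1/(1+\delta)$ stays close to $1$ (handled by the bound on $\delta$ in terms of $\eta$).
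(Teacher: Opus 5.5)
Your proof is correct and is essentially the paper's argument. Both rescale $x\pm z$ by the bound $1+\delta$ and apply uniform convexity to the resulting pair; the only difference is that the paper argues by contradiction along a sequence with $\delta=\frac1n$, whereas you extract an explicit $\delta$ from the modulus of convexity, which makes the independence from $x\in S_X$ visible at once. One harmless slip: for the choice $\delta=\min\{1,\eta/(2(1-\eta))\}$ the intermediate bound $(1+\delta)(1-\eta)\le(1+\eta/2)(1-\eta)$ does not hold (and the formula is undefined when $\eta=1$), although $(1+\delta)(1-\eta)<1$ is still true; your alternative $\delta=\min\{1,\eta/2\}$ has no such issue.
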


\begin{proof}
Assume, to the contrary, that $S_X$ is not uniformly strongly extreme. Then there exists $\varepsilon>0$ such that, for every $n\in \N$, there are $x_n\in S_X$ and $y_n \in X$ with
\[
\|x_n\pm y_n\|\leq 1+\frac{1}{n}~~ \text{and}~~ \|y_n\|>\varepsilon.
\]
 Let
\[
u_n=\frac{x_n+y_n}{1+\frac{1}{n}} ~~ \text{and}~~ v_n= \frac{x_n-y_n}{1+\frac{1}{n}}.
\]
Note that $\|u_n\|,\|v_n\|\leq 1$ and $\|u_n - v_n\| > \frac{2\varepsilon}{1+\frac{1}{n}} > \varepsilon$. Moreover,
\[
\left\|\frac{u_n+v_n}{2}\right\| = \frac{1}{1+\frac{1}{n}} \to 1.
\]
This contradicts the uniform convexity of $X$. Hence, $S_X$ is a uniformly strongly extreme subset of $X_1$.
\end{proof}


\begin{theorem}
    \label{t:unif-convex}
    Let $X$ be a uniformly convex Banach space. Then the set
    \[
    D=\{T\in B(X)_1: T ~\text{is an isometry in}~ B(X)\}
    \]
    is a uniformly strongly extreme subset of $B(X)_1$. Similarly, if $X^*$ is uniformly convex, then the set of coisometries in $B(X)$ also forms a uniformly strongly extreme subset of $B(X)_1$.
\end{theorem}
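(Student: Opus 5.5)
The plan is to reduce to Lemma~\ref{lem:unit-sphere-unif-st-ext} by testing operators on unit vectors. Fix $\varepsilon>0$, and let $\delta>0$ be the number given by Lemma~\ref{lem:unit-sphere-unif-st-ext} for the unit sphere $S_X$ of the uniformly convex space $X$ and this $\varepsilon$. The aim is to show that this same $\delta$ works for every isometry $T\in D$. The constant $\delta$ depends only on the modulus of convexity of $X$, not on $T$, and this is what gives uniformity over $D$.

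Let $T\in D$ and suppose $S\in B(X)$ satisfies $\|T\pm S\|\le 1+\delta$. Take any $x\in X$ with $\|x\|=1$. Then $\|Tx\|=1$, since $T$ is an isometry, so $Tx\in S_X$. Moreover
\[
\|Tx\pm Sx\|=\|(T\pm S)x\|\le 1+\delta .
\]
By the choice of $\delta$ in Lemma~\ref{lem:unit-sphere-unif-st-ext}, this gives $\|Sx\|\le\varepsilon$. Taking the supremum over unit vectors $x$ yields $\|S\|\le\varepsilon$. Hence $D$ is uniformly strongly extreme in $B(X)_1$.

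For coisometries, assume $X^*$ is uniformly convex. Take $\delta$ from Lemma~\ref{lem:unit-sphere-unif-st-ext} applied to $S_{X^*}$. Let $T$ be a coisometry, so that $T^*\in B(X^*)$ is an isometry. If $\|T\pm S\|\le 1+\delta$, then
\[
\|T^*\pm S^*\|=\|(T\pm S)^*\|\le 1+\delta,
\]
since taking adjoints is isometric. For each $f\in S_{X^*}$ we have $T^*f\in S_{X^*}$ and $\|T^*f\pm S^*f\|\le 1+\delta$. The lemma then gives $\|S^*f\|\le\varepsilon$, so $\|S\|=\|S^*\|\le\varepsilon$.

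There is no real obstacle; the argument is routine once the lemma is available. The only point needing care is that the lemma supplies a single $\delta$ for all points of the unit sphere. This is exactly what lets one $\delta$ serve every $x$ at once, and hence every isometry or coisometry at once. Note that passing to adjoints is legitimate here because the only property of $T^*$ used is that it is an isometry on $X^*$; no reflexivity assumption is needed at this step.
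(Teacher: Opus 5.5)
Your proposal is correct and follows the paper's proof. You take $\delta$ from Lemma~\ref{lem:unit-sphere-unif-st-ext}, evaluate $T\pm S$ on unit vectors, and use $Tx\in S_X$ to get $\|Sx\|\le\varepsilon$. For coisometries the paper only says ``similarly''; your explicit dualization ($\|T^*\pm S^*\|=\|T\pm S\|$, the lemma applied in the uniformly convex space $X^*$, then $\|S\|=\|S^*\|$) is exactly the intended argument.
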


\begin{proof}
    Let $\varepsilon>0$ be given. Since $X$ is a uniformly convex Banach space, from Lemma \ref{lem:unit-sphere-unif-st-ext}, it follows that the unit sphere $S_X$ is a uniformly strongly extreme subset of $X_1$. Therefore, for the given $\varepsilon>0$, there exists $\delta>0$ such that for all $x\in S_X$, if $\|x\pm y\|\leq 1+\delta$ for $y\in X$ then 
    \begin{equation}
        \|y\|\leq \varepsilon.
    \end{equation}
 We use this same $\delta>0$ to show that $D$ is a uniformly strongly extreme subset of $B(X)_1$. To this end, let $T\in D$ be arbitrary, and suppose that $\|T\pm S\|\leq 1+\delta$ for some $S\in B(X)$. Then $\sup_{\|x\|=1}\|T(x)\pm S(x)\|\leq 1+\delta$. In particular,
for every $x\in S_X$, we have $\|T(x)\pm S(x)\|\leq 1+\delta$. Since T is an isometry, $T(x)\in S_X$. Hence, by the uniform strong extremality of $S_X$, it follows that $\|S(x)\|\leq \varepsilon$ for every $x\in S_X$. Therefore, $\|S\|\leq \varepsilon$. 
This shows that $D$ is a uniformly strongly extreme subset of $B(X)_1$. 
\end{proof}

Now we prove Theorem \ref{t:B}.
\begin{proof}
Let $K$ denote the set of all isometries and coisometries in $B(X)_1$. It follows from Theorem \ref{t:unif-convex} that $K$ is a uniformly strongly extreme subset of $B(X)_1$.
Since $f(\omega)$ is either an isometry or a coisometry in $B(X)_1$, for every $\omega \in \Omega$, it follows that $f(\Omega)\subseteq K$. Hence $f(\Omega)$ is a uniformly strongly extreme subset of $B(X)_1$. By applying \cite[Theorem~5]{HS}, one obtains that $f$ is a strongly extreme point of $WC(\Omega, B(X))_1$. 
Moreover, by the same argument as in Corollary \ref{cor:unif-st-ext-wk}, we conclude that the set of all such functions forms a uniformly strongly extreme subset of $WC(\Omega, B(X))_1$. This completes the proof.
\end{proof}

\begin{remark}
\label{rem:lp-case}
     Let $1<p<\infty$, $p \neq 2$. Hennefeld in \cite{H} exhibits compact operators that are extreme points of ${ B}(\ell^p)_1$ that clearly are neither isometries or coisometries. To see this, in the notation of \cite{H},  note that towards the end of the proof of Proposition 2.3,  to check the extreme point criterion, one does not need the operator $B \in {B}(\ell^p)_1$ to be compact. Thus, the operators $S+V+T$ are compact extreme points in ${ B}(\ell^p)_1$. As ${\mathcal K}(\ell^p)^{\ast\ast} = { B}(\ell^p)$, we get that these are weak$^\ast$-extreme points in ${\mathcal K}(\ell^p)_1$.  It can be shown that these extreme points are not strongly extreme.  An open problem in this area is to understand the geometric structure of a Banach space $X$ such that $X^{\ast\ast}$ is isometric to ${ B}(\ell^p)$.

\end{remark}

Using the fact that every Hilbert space is a uniformly convex and uniformly smooth Banach space, one has the following proposition.

\begin{proposition}
    \label{p:Hilbert space case}
     Let $\{H_\alpha\}_{\alpha \in I}$ be a family of Hilbert spaces, and let
       $X=\oplus_\infty B(H_\alpha)$ $(\ell_\infty$-sum). 
    Suppose that $f \in WC(\Omega, X)_1$ satisfies $f(\omega) \in \partial_e X_1$  for every $\omega\in \Omega$. Then $f$ is a strongly extreme point of $WC(\Omega, X)_1$. Moreover, the set
    \[
  L=  \{f\in WC(\Omega, X)_1: f(\omega) \in \partial_e X_1 ~\text{for all}~ \omega \in \Omega\}
    \]
    is a uniformly strongly extreme subset of $WC(\Omega, X)_1$.
\end{proposition}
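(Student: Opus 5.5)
The plan is to reduce the statement to Theorem~\ref{t:unif-st-ext-of-A_1} together with the pointwise argument already used in Corollary~\ref{cor:unif-st-ext-wk} and Theorem~\ref{t:B}. The key observation is that $X=\oplus_\infty B(H_\alpha)$ is itself a unital $C^*$-algebra: it carries the coordinatewise product and involution, and the $\ell_\infty$-sum norm is a $C^*$-norm. So Theorem~\ref{t:unif-st-ext-of-A_1} applies directly to $\cA=X$, and $\partial_e X_1$ is a uniformly strongly extreme subset of $X_1$. I would note, as a cross-check, that $\partial_e X_1$ consists exactly of the tuples $(u_\alpha)$ with each $u_\alpha\in\partial_e B(H_\alpha)_1$, i.e.\ each $u_\alpha$ an isometry or a coisometry by Kadison's theorem. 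This is consistent with the remark that Hilbert spaces are uniformly convex and uniformly smooth, so one could alternatively run Theorem~\ref{t:unif-convex} coordinatewise and take suprema over $\alpha$. Either route gives, for each $\varepsilon>0$, a $\delta>0$ (for instance $\delta=-1+\sqrt{1+\varepsilon^2/3}$) that works simultaneously for all $x\in\partial_e X_1$.

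Next, fix $f\in L$. Then $f(\Omega)\subseteq\partial_e X_1$, so $f(\Omega)$ is uniformly strongly extreme. Applying \cite[Theorem~5]{HS} with the dense $G_\delta$ set $\Gamma=\Omega$ shows that $f$ is a strongly extreme point of $WC(\Omega,X)_1$.

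For the uniform statement I would repeat the argument of Corollary~\ref{cor:unif-st-ext-wk} directly rather than rely on \cite{HS}. Given $\varepsilon>0$, take the $\delta$ above. Let $f\in L$ and $g\in WC(\Omega,X)$ with $\|f\pm g\|\le 1+\delta$. Then for every $\omega\in\Omega$,
\[
\|f(\omega)\pm g(\omega)\|\le 1+\delta \quad\text{with } f(\omega)\in\partial_e X_1 ,
\]
so $\|g(\omega)\|\le\varepsilon$. Taking the supremum over $\omega$ gives $\|g\|\le\varepsilon$. Since $\delta$ depends only on $\varepsilon$, this proves that $L$ is uniformly strongly extreme. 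Here no lower semicontinuity argument is needed, because the extremality condition holds at every point of $\Omega$.

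The only step needing care is the first: confirming that the $\ell_\infty$-sum is a $C^*$-algebra, so that Theorem~\ref{t:unif-st-ext-of-A_1} applies verbatim. The point to watch is that the $\delta$ obtained there is uniform across all coordinates $\alpha$. That uniformity is exactly what the explicit, $u$-independent choice of $\delta$ in Theorem~\ref{t:unif-st-ext-of-A_1} provides. If one instead argues coordinatewise through Theorem~\ref{t:unif-convex}, uniformity still holds, since the Hilbert space modulus of convexity is the same for every $H_\alpha$.
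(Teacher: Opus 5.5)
Your proof is correct. Its skeleton matches the paper's: uniform strong extremality of $f(\Omega)$, then \cite[Theorem~5]{HS}, then the pointwise estimate $\sup_\omega\|g(\omega)\|\le\varepsilon$ for the set $L$. The difference is where the uniform $\delta$ comes from.

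\textbf{The paper's route.} It argues coordinatewise. Each $f(\omega)(\alpha)$ is an extreme point of $B(H_\alpha)_1$, hence an isometry or a coisometry. The paper then reruns the argument of Theorem~\ref{t:unif-convex}, which rests on Lemma~\ref{lem:unit-sphere-unif-st-ext} and the uniform convexity of Hilbert spaces. This tacitly needs the resulting $\delta$ to be independent of $\alpha$, which holds because all Hilbert spaces share one modulus of convexity.

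\textbf{Your route.} You observe that $\oplus_\infty B(H_\alpha)$ is itself a unital $C^*$-algebra and apply Theorem~\ref{t:unif-st-ext-of-A_1} once. This gives the explicit $\delta=-1+\sqrt{1+\varepsilon^2/3}$ for all of $\partial_e X_1$. It makes the coordinate decomposition, Kadison's isometry/coisometry dichotomy, and uniform convexity unnecessary.

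\textbf{What each buys.} Your route is shorter and more general, since nothing in it uses the special form of $X$. It shows that for any unital $C^*$-algebra $\cA$, the set of $f\in WC(\Omega,\cA)_1$ with $f(\Omega)\subseteq\partial_e\cA_1$ is uniformly strongly extreme. In particular, the RNP hypothesis in Corollary~\ref{c:vN} is not needed for its conclusion. The paper's route instead keeps the result inside the framework of isometries and coisometries on uniformly convex spaces used for Theorem~\ref{t:B}.

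Finally, your direct pointwise argument already proves that $L$ is uniformly strongly extreme, and every element of such a set is a strongly extreme point. So the appeal to \cite[Theorem~5]{HS} for the first assertion can be dropped.
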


\begin{proof}
Since for each $\omega \in \Omega$, $f(\omega) \in X_1$ is a linear extreme point, it follows that for each $\alpha \in I$, $f(\omega)(\alpha)$ is a linear extreme point of $B(H_\alpha)_1$. In particular, each $f(\omega)(\alpha)$ is either an isometry or a coisometry in $B(H_\alpha)$. 
Now using the same argument as in Theorem \ref{t:unif-convex}, we have that 
the set 
\[
\{f(\omega) : \omega \in \Omega\}
\]
is a uniformly strongly extreme subset of $X_1$. Applying \cite[Theorem~5]{HS}, we obtain that $f$ is a strongly extreme point of $WC(\Omega, X)_1$. The proof that the set $L$ is uniformly strongly extreme now follows in the same manner as in the proof of Theorem~\ref{t:A}. This completes the proof.
\end{proof}

The following corollary is a special case of Proposition \ref{p:Hilbert space case}.

\begin{corollary}
    \label{c:vN}
    Let $\cM$ be a von Neumann algebra whose predual has the Radon Nikodym property (RNP). Then the set
    \[
    \{f\in WC(\Omega, \cM)_1: f(\omega)\in \partial_e \cM_1 ~\text{for all}~\omega\in \Omega\}
    \]
    is a uniformly strongly extreme subset of $WC(\Omega, \cM)_1$.
\end{corollary}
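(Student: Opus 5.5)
The plan is to reduce Corollary~\ref{c:vN} to Proposition~\ref{p:Hilbert space case}: a von Neumann algebra whose predual has the RNP must be an $\ell_\infty$-sum of full algebras $B(H_\alpha)$. Once $\cM$ is identified $*$-isomorphically, hence isometrically, with $\oplus_\infty B(H_\alpha)$, the conclusion follows directly from that proposition.

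\textbf{Structural step.} I will argue that $\cM_*$ having the RNP forces $\cM$ to be atomic, that is, a direct sum of type I factors.

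1. Decompose $\cM = \cM_a \oplus \cM_d$, where $\cM_a$ is the atomic part and $\cM_d$ is the diffuse part (no minimal projections).

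2. The predual decomposes accordingly as the $\ell_1$-sum $\cM_* = (\cM_a)_* \oplus_1 (\cM_d)_*$. The RNP passes to closed subspaces, so $(\cM_d)_*$ has the RNP.

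3. If $\cM_d \neq 0$, it contains a diffuse masa. This masa is $*$-isomorphic to $L^\infty(\mu)$ for a diffuse measure $\mu$, and in the $\sigma$-finite case it carries a faithful normal conditional expectation.

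4. Through the predual of that expectation, $L^1(\mu)$ embeds as a (complemented) subspace of $(\cM_d)_*$. Since $L^1[0,1]$ lacks the RNP, this is a contradiction. For the non-$\sigma$-finite case, I first cut down by a $\sigma$-finite projection $e\in\cM_d$, using that $(e\cM_d e)_*$ is a subspace of $(\cM_d)_*$.

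Alternatively, I would simply cite the known characterization: the predual of a von Neumann algebra has the RNP if and only if the algebra is atomic, equivalently $\cM\cong\oplus_\infty B(H_\alpha)$.

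\textbf{Conclusion.} With the structural step in hand, fix a $*$-isomorphism $\Phi:\cM\to X=\oplus_\infty B(H_\alpha)$. It is isometric, so it maps $\partial_e\cM_1$ onto $\partial_e X_1$. Composition $f\mapsto \Phi\circ f$ is then an isometric isomorphism of $WC(\Omega,\cM)$ onto $WC(\Omega,X)$, since $\Phi$ is weak-to-weak continuous and so is its inverse. This map carries the set in the statement onto the set $L$ of Proposition~\ref{p:Hilbert space case}. Uniform strong extremality is preserved by surjective linear isometries, so the corollary follows.

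The main obstacle is the structural step, especially the embedding of $L^1$ into the predual of a diffuse algebra in full generality. I would handle it by citing the known RNP characterization rather than reproving it, keeping the masa/conditional-expectation argument above as a sketch.
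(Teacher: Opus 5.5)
Your proposal is correct and follows the paper's proof: you invoke the structure theorem (the paper's \cite{C}) giving $\cM\cong\oplus_\infty B(H_\alpha)$, then transfer Proposition~\ref{p:Hilbert space case} along the isometric $*$-isomorphism, a transfer step the paper leaves implicit. Relying on the citation is the right decision, because your masa sketch is not sound as written. A faithful normal conditional expectation onto a masa forces the masa into the centralizer of some faithful normal state, and this fails for instance for $L^\infty[0,1]\bar{\otimes}L^\infty[0,1]\subset B(L^2[0,1])\bar{\otimes}L^\infty[0,1]$. If you want a self-contained argument, note instead that a diffuse von Neumann algebra has no normal pure states, so its normal state space has no extreme points, which already contradicts the RNP via the Krein--Milman property.
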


\begin{proof}
By \cite{C}, there exists a family of Hilbert spaces
\(\{H_\alpha\}_{\alpha \in I}\) such that
$\mathcal{M}$ is $*$-isomorphioc to $\oplus_{\infty} B(H_\alpha)$.
hence, the conclusion follows from Proposition~\ref{p:Hilbert space case}.    
\end{proof}

The next result gives the commutative case of Corollary~\ref{cor:unif-st-ext-wk}. Recall that Banach spaces whose biduals are commutative von Neumann algebras are called $L^1$-predual spaces; see \cite{L} for more details. We need the following property of an $L_1$-predual space $X$ from \cite{HL}. If $x_0\in \partial_e X_1$ and $x^*\in \partial_e X_1^*$, then $|x^*(x_0)|=1$.
\begin{theorem}
\label{t:unif}
Let $X$ be a Banach space such that $X^{**}$ is isometric to $C(\Omega)$ for some compact Hausdorff space $\Omega$.
Let $\Omega'$ be another compact Hausdorff space, and let $\Gamma \subseteq \Omega'$ be dense in $\Omega'$. 
Then the set 
\[
K=\{ f\in WC(\Omega', X)_1 : f(\omega) \in \partial_e X_1 ~ \text{for all}~ \omega\in \Gamma\}
\]
is a uniformly strongly extreme subset of $WC(\Omega', X)_1$.
\end{theorem}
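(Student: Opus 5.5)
We follow the template of Corollary~\ref{cor:unif-st-ext-wk}. The crux is to replace the $C^*$-algebraic input, Theorem~\ref{t:unif-st-ext-of-A_1}, by the $L_1$-predual property from \cite{HL}, and to show that $\partial_e X_1$ is itself uniformly strongly extreme in $X_1$ (indeed in $X^{**}_1$). Once we have a uniform $\delta$ for the set $\{f(\omega):\omega\in\Gamma,\ f\in K\}\subseteq\partial_e X_1$, the rest is the lower-semicontinuity argument already used.

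\emph{Step 1: $\partial_e X_1$ is uniformly strongly extreme in $X_1$.} Fix $x_0\in\partial_e X_1$ and $y\in X$ with $\|x_0\pm y\|\le 1+\delta$. By \cite{HL}, $|x^*(x_0)|=1$ for every $x^*\in\partial_e X^*_1$. Fix such an $x^*$ and write $a=x^*(x_0)$, $b=x^*(y)$, so $|a|=1$ and $|a\pm b|\le 1+\delta$. By the parallelogram law in $\mathbb C$,
\[
2+2|b|^2=|a+b|^2+|a-b|^2\le 2(1+\delta)^2,
\]
hence $|b|\le\sqrt{2\delta+\delta^2}$.

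Since the norm of $y$ is attained as a supremum over $\partial_e X^*_1$ (by Krein--Milman, or the Bauer maximum principle, applied to the weak$^*$-continuous function $x^*\mapsto|x^*(y)|$), we get $\|y\|\le\sqrt{2\delta+\delta^2}$. Given $\varepsilon>0$, choose $\delta=-1+\sqrt{1+\varepsilon^2}$. This $\delta$ is independent of $x_0$, as required.

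\emph{Step 2: the estimate on the dense set.} The same computation works verbatim for $y^{**}\in X^{**}$ once we know $|x^*(x_0)|=1$ for $x^*\in\partial_e X^*_1$. That is, Step~1 holds in $X^{**}_1$ as well; alternatively one can invoke Proposition~\ref{p:unif-remain-in-bidual}. Either version suffices here, since the functions $g$ take values in $X$.

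Now let $f\in K$ and $g\in WC(\Omega',X)$ with $\|f\pm g\|\le 1+\delta$. For each $\omega\in\Gamma$ we have $\|f(\omega)\pm g(\omega)\|\le 1+\delta$ with $f(\omega)\in\partial_e X_1$, so $\|g(\omega)\|\le\varepsilon$ by Step~1.

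\emph{Step 3: extending to all of $\Omega'$.} Since $g$ is weakly continuous, $\omega\mapsto\|g(\omega)\|$ is lower semicontinuous, being a supremum of the continuous functions $|x^*(g(\cdot))|$. Because $\Gamma$ is dense in $\Omega'$, the set $\{\omega:\|g(\omega)\|\le\varepsilon\}$ is closed and contains $\Gamma$, so it is all of $\Omega'$. Hence $\|g\|\le\varepsilon$, with $\delta$ independent of $f$, which is exactly the uniform strong extremality of $K$.

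\emph{Main obstacle.} The only nontrivial step is justifying $|x^*(x_0)|=1$ for \emph{all} $x^*\in\partial_e X^*_1$; we take this from \cite{HL} as the paper does. We also need the Krein--Milman reduction showing $\|y\|=\sup\{|x^*(y)|:x^*\in\partial_e X^*_1\}$. Everything else is the two-point parallelogram estimate.
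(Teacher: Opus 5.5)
Your proof is correct, but it takes a more direct route than the paper. The paper uses the \cite{HL} property only to conclude that each $f(\omega)$, $\omega\in\Gamma$, is a strongly extreme point of $X_1$. It then transfers strong extremality to $X^{**}_1$ and observes that $f(\omega)$ is a linear extreme point of the $C^*$-algebra $X^{**}\cong C(\Omega)$. The uniform $\delta$ comes from the $C^*$-algebraic Theorem~\ref{t:unif-st-ext-of-A_1}, followed by the density and lower-semicontinuity argument of Corollary~\ref{cor:unif-st-ext-wk}.

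You instead observe that two facts already give a uniform $\delta$ for all of $\partial_e X_1$ inside $X_1$: the parallelogram estimate $|x^*(y)|\le\sqrt{2\delta+\delta^2}$ for $x^*\in\partial_e X^*_1$, and the fact that $\partial_e X^*_1$ norms $X$. Since $g$ is $X$-valued, nothing more is needed. The natural proof of the paper's first step (that the \cite{HL} property makes $f(\omega)$ strongly extreme) is essentially your Step~1, and you noticed that it is already uniform.

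Your route has several advantages:
\begin{enumerate}
\item It bypasses the bidual, the permanence of strong extremality under $X\subseteq X^{**}$, and Theorem~\ref{t:unif-st-ext-of-A_1}.
\item It gives the better constant $\delta=-1+\sqrt{1+\varepsilon^2}$, compared with $-1+\sqrt{1+\varepsilon^2/3}$.
\item It shows that in \emph{any} Banach space the set $\{x\in X_1: |x^*(x)|=1 \text{ for all } x^*\in\partial_e X^*_1\}$ is uniformly strongly extreme. The $L^1$-predual hypothesis enters only through \cite{HL}.
\end{enumerate}
The paper's route, by contrast, keeps the argument inside its $C^*$-algebraic framework.

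One correction to your Step~2 aside: the computation does not carry over verbatim to $y^{**}\in X^{**}$. The problem is that $y^{**}$ is not weak$^*$-continuous on $X^*_1$, so $\partial_e X^*_1$ need not norm $X^{**}$. For example, take $X=C[0,1]$ and the functional $\mu\mapsto\mu_c([0,1])$ on $C[0,1]^*$, where $\mu_c$ is the non-atomic part of $\mu$. It vanishes on every $\lambda\delta(t)$, yet it has norm one. The bidual statement is still true via Proposition~\ref{p:unif-remain-in-bidual}. As you note, it is not needed, so this does not affect your proof.
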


\begin{proof}
    Since $X^{**}$ is $C(\Omega)$ and for each $f\in K$ and $\omega \in \Gamma$,  $f(\omega)$ is an extreme point of $X_1$, it follows easily from the result quoted above that $f(\omega)$ is a strongly extreme point. Using the fact that strongly extreme points remain strongly extreme in bidual, one obtains that $f(\omega)$ is a strongly extreme point of $X_1^{**}$. Hence, Theorem~\ref{t:unif-st-ext-of-A_1} implies that $\{ f(\omega): \omega \in \Gamma\}$ is a uniformly strongly extreme subset of $X_1^{**}$. Applying the same argument as in Corollary~\ref{cor:unif-st-ext-wk}, we conclude that $K$ is a uniformly strongly extreme subset of $WC(\Omega', X)_1$. This completes the proof.
\end{proof}

As a further application to the non self-adjoint case, we point out the following proposition.
Let $\Omega$ be a compact Hausdorff space and $\cB \subset C(\Omega)$ be a closed subalgebra containing constants and separating points of $\Omega$. Without loss of generality, we may assume that $\Omega$ is the Shilov boundary of $\cB$. See \cite{ES} for further details.
In \cite{R2}, the author proved that if $f$ is a linear extreme point of $\cB_1$, which is also a weak$^{*}$-extreme point, then $f$ is in fact a strongly extreme point of $\cB_1$. Moreover, the author also proved that $|f|\equiv 1$ on $\Omega$. Hence, the following proposition is easy to see.

\begin{proposition}
    \label{p:unif-alg}
    Let $\cB$ be a uniform algebra. Then the set
    \[
    D= \{ f\in WC(\Omega, \cB)_1: f(\omega)~ \text{is a weak$^{*}$-extreme point in} ~ \cB_1 ~\text{for all}~\omega\in \Omega\}
    \]
    is a uniformly strongly extreme subset of $WC(\Omega, \cB)_1$.
\end{proposition}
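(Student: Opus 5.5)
The plan is to show that the set $S=\{f(\omega):\omega\in\Omega,\ f\in D\}$ is a uniformly strongly extreme subset of $\cB_1$ with a single $\delta$ for each $\varepsilon$. Then we pass to all of $\Omega$ by lower semicontinuity, exactly as in Corollary~\ref{cor:unif-st-ext-wk}.

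First, fix $f\in D$ and $\omega\in\Omega$, and put $g=f(\omega)$. By the result of \cite{R2} quoted above, $g$ is a weak$^*$-extreme point of $\cB_1$, so $|g|\equiv 1$ on $\Omega$, the Shilov boundary. Now let $\varepsilon>0$ and $h\in\cB$ with $\|g\pm h\|\le 1+\delta$. At each point $t\in\Omega$ the scalars $g(t)$ and $h(t)$ satisfy $|g(t)|=1$ and $|g(t)\pm h(t)|\le 1+\delta$. Since $\mathbb{C}$ is a Hilbert space, the parallelogram law gives
\[
2|g(t)|^2+2|h(t)|^2=|g(t)+h(t)|^2+|g(t)-h(t)|^2\le 2(1+\delta)^2 .
\]
Hence $|h(t)|^2\le 2\delta+\delta^2$ for every $t$. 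Choosing $\delta=-1+\sqrt{1+\varepsilon^2}$ yields $\|h\|\le\varepsilon$. This is just Lemma~\ref{lem:unit-sphere-unif-st-ext} for the scalar field, applied pointwise. The $\delta$ depends only on $\varepsilon$, so the set of all weak$^*$-extreme points of $\cB_1$ is uniformly strongly extreme in $\cB_1$. In particular, $S$ is uniformly strongly extreme.

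Next, let $f\in D$ and $k\in WC(\Omega,\cB)$ with $\|f\pm k\|\le 1+\delta$. For each $\omega$ we have $\|f(\omega)\pm k(\omega)\|\le 1+\delta$, so the previous step gives $\|k(\omega)\|\le\varepsilon$. Taking the supremum over $\omega$ gives $\|k\|\le\varepsilon$. Here the hypothesis holds at every $\omega$, so no density or lower semicontinuity argument is even needed. Since $\delta$ does not depend on $f$, $D$ is uniformly strongly extreme. Alternatively, for each fixed $f$ one may invoke \cite[Theorem~5]{HS} with $\Gamma=\Omega$.

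The only point requiring care is the first step: the conclusion $|g|\equiv1$ must hold on the Shilov boundary $\Omega$, where the sup norm of $\cB$ is attained. This is why we assume, as stated before the proposition, that $\Omega$ is the Shilov boundary. Once the modulus is identically one there, the estimate is purely pointwise and the rest is routine.
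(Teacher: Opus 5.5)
Your proof is correct and follows essentially the route the paper has in mind when it calls the proposition ``easy to see''. The unimodularity $|f(\omega)|\equiv 1$ from \cite{R2} yields a $\delta$ depending only on $\varepsilon$; your pointwise parallelogram estimate is just the commutative case of Theorem~\ref{t:unif-st-ext-of-A_1}. The passage to $WC(\Omega,\cB)$ is then the pointwise supremum argument of Corollary~\ref{cor:unif-st-ext-wk}, and, as you note, no density or semicontinuity step is needed since the hypothesis holds at every $\omega$.
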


The concluding result deals with the topological nature of the set of extremal points. 

\begin{corollary}
    \label{c:topo-nature}
    Let $X$ be an $L^1$-predual space. Then $\partial_e X_1$ (when non-empty) is a weakly closed set.
    In particular, the set
    \[
    L=\{ f\in WC(\Omega, X)_1: f(\omega) \in \partial_e X_1 ~\text{for all}~\omega\in \Omega\}
    \]
    is a weakly closed set.
 \end{corollary}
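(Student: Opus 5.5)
Since $X$ is an $L^1$-predual, the property of \cite{HL} quoted before Theorem~\ref{t:unif} gives a clean description of $\partial_e X_1$:
\[
\partial_e X_1=\{x\in X_1 : |x^*(x)|=1 \ \text{for all}\ x^*\in \partial_e X_1^*\}.
\]
Weak closedness will then be immediate, since for each fixed $x^*$ the map $x\mapsto |x^*(x)|$ is weakly continuous. So the plan is first to establish this identity and then to read off both assertions.

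The inclusion ``$\subseteq$'' is exactly the quoted result of \cite{HL}. For ``$\supseteq$'', suppose $x\in X_1$ satisfies $|x^*(x)|=1$ for every $x^*\in\partial_e X_1^*$, and suppose $x=\tfrac12(y+z)$ with $y,z\in X_1$. For each $x^*\in\partial_e X_1^*$, the number $x^*(x)$ has modulus one and is the midpoint of $x^*(y)$ and $x^*(z)$, both in the closed unit disc. By strict convexity of the disc, $x^*(y)=x^*(z)=x^*(x)$. Hence $x^*(y-z)=0$ for all $x^*\in\partial_e X_1^*$. Since $X_1^*$ is the weak$^*$-closed convex hull of its extreme points (Krein--Milman), $y-z$ is annihilated by all of $X^*$, so $y=z$. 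Thus $x\in\partial_e X_1$, and the identity holds. This step is the only one carrying any content; I expect it to be the main obstacle only in the sense of confirming that the \cite{HL} property applies in the stated direction.

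With the identity in hand, $\partial_e X_1$ is the intersection of $X_1$, which is weakly closed, with the sets $\{x : |x^*(x)|=1\}$ for $x^*\in\partial_e X_1^*$, each weakly closed. Hence $\partial_e X_1$ is weakly closed.

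For $L$, let $f_\alpha\to f$ weakly in $WC(\Omega,X)$, with $f_\alpha\in L$. For each $\omega\in\Omega$, the map $g\mapsto x^*(g(\omega))$ is a continuous linear functional on $WC(\Omega,X)$: it is bounded by $\|x^*\|\,\|g\|$. Therefore $f_\alpha(\omega)\to f(\omega)$ weakly in $X$. By the first part, $f(\omega)\in\partial_e X_1$ for every $\omega$. Finally, $WC(\Omega,X)_1$ is norm-closed and convex, hence weakly closed, so $f\in WC(\Omega,X)_1$ and therefore $f\in L$. Equivalently, $L$ is the intersection of $WC(\Omega,X)_1$ with the preimages of the weakly closed set $\partial_e X_1$ under the weak-to-weak continuous evaluation maps $g\mapsto g(\omega)$.
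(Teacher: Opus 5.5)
Your proof is correct and takes the same route as the paper: the \cite{HL} property gives $|x^*(x)|=1$ on $\partial_e X_1^*$, this condition passes to weak limits, and for $L$ you use the functionals $g\mapsto x^*(g(\omega))$, which are the paper's $\delta(\omega)\otimes x^*$. Your strict-convexity and Krein--Milman argument for the reverse inclusion supplies the step the paper only asserts (``this implies $x_0\in\partial_e X_1$''), and it holds in any Banach space.
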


 \begin{proof}
     If $x\in \partial_e X_1$, then it follows from the results in \cite{HL}, for each $x^*\in \partial_e X_1^*$, $|x^*(x)|=1$. So, if $\{x_\alpha\}_{\alpha \in I} \subseteq \partial_e X_1$ is a net and $x_\alpha$ converges weakly to $x_0$ in $X_1$. Then for any  $x^*\in \partial_e X_1^*$, $|x^*(x_0)|=1$. But this implies $x_0\in \partial_e X_1$. Similarly, if $\{f_\alpha\}\subseteq L$, $f\in WC(\Omega, X)_1$ and $f_\alpha$ converges to $f$ weakly, let $x^*\in \partial_e X_1^*$, $\omega \in \Omega$,  consider the functional $(\delta(\omega) \otimes x^*)(f_\alpha)=x^*(f_\alpha(\omega))$ this will converge to $(\delta(\omega) \otimes x^*)(f)=x^*(f(\omega))$. Now, as we observed earlier since $|x^*(f_\alpha(\omega))|=1$ for all $\alpha$, we obtain $|x^*(f(\omega))|=1$ for all $x^*$. In particular, $f(\omega)\in \partial_e X_1$. Hence $f\in L$.
 \end{proof}

 \begin{remark}
     We do not know the corresponding $C^*$-algebra version of Corollary 5.
 \end{remark}

\vspace{.3cm}
\textbf{Acknowledgements:} This work is part of the project ``Classification of Banach spaces using differentiability",
funded by the Anusandhan National Research Foundation (ANRF), Core Research Grant, CRG2023-000595.
The first author is a research associate in this project. 

\vspace{.3cm}
\textbf{Declaration:}
No conflicts of interest.

\bibliographystyle{alpha}
\bibliography{references}
\end{document}